\title[Semiample perturbations for log canonical varieties]
{Semiample perturbations for log canonical varieties 
over an F-finite field containing an infinite perfect field} 
\author{Hiromu Tanaka} 
\subjclass[2010]{14E30, 13A35.}
\keywords{log canonical, semi-ample, positive characteristic}
\address{Department of Mathematics, Imperial College, London, 180 Queen's Gate, 
London SW7 2AZ, UK} 
\email{h.tanaka@imperial.ac.uk}
\newcommand{\Tr}[0]{{\operatorname{Tr}}}
\newcommand{\Spec}[0]{{\operatorname{Spec}}}
\newcommand{\Supp}[0]{{\operatorname{Supp}}}
\newcommand{\Ex}[0]{{\operatorname{Ex}}}
\newtheorem{thm}{Theorem}
\newtheorem{lem}[thm]{Lemma}
\newtheorem{prop}[thm]{Proposition}
\newtheorem{claim}[thm]{Claim}
\theoremstyle{definition}
\newtheorem{dfn}[thm]{Definition}
\newtheorem{rem}[thm]{Remark}
\newtheorem*{ack}{Acknowledgments}
\newcommand{\MO}{\mathcal{O}}
\newcommand{\K}{\mathbb{K}}
\newcommand{\R}{\mathbb{R}}
\newcommand{\Q}{\mathbb{Q}}
\newcommand{\Z}{\mathbb{Z}}
\newcommand{\p}{\mathfrak{p}}
\newcommand{\m}{\mathfrak{m}}
\begin{document}

\maketitle

\begin{abstract}
Let $k$ be an $F$-finite field containing an infinite perfect field 
of positive characteristic. 
Let $(X, \Delta)$ be a projective log canonical pair over $k$. 
In this note we show that, for a semi-ample divisor $D$ on $X$, 
there exists an effective $\mathbb{Q}$-divisor 
$\Delta' \sim_{\mathbb Q} \Delta+D$ such that $(X, \Delta')$ is log canonical 
if there exists a log resolution of $(X, \Delta)$. 
\end{abstract}



\section{Main theorem}

In this note, we prove the following theorem that is non-trivial even for the klt case. 

\begin{thm}\label{Theorem-MMP}
Fix $\K \in \{\Q, \R\}$. 
Let $k$ be an $F$-finite field containing an infinite perfect field $k_0$ of characteristic $p>0$. 
Let $(X, \Delta)$ be a projective log canonical $($resp. klt$)$ 
pair over $k$, 
where $\Delta$ is an effective $\K$-divisor. 
Let $D$ be a semi-ample $\K$-Cartier $\K$-divisor on $X$. 
If there is a log resolution of $(X, \Delta)$, 
then there exists an effective $\K$-Cartier $\K$-divisor $D' \sim_{\K} D$ 
such that $(X, \Delta+D')$ is log canonical $($resp. klt$)$. 
\end{thm}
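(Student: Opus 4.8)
The plan is to reduce, via the canonical model of $D$, to a Bertini‑type statement on a log resolution of $(X,\Delta)$, which is then attacked using the two hypotheses on $k$. First I would reduce to $\K=\Q$ (the case $\K=\R$ is recovered at the end). Replacing $D$ by a multiple, fix $m_0$ with $m_0D$ Cartier and basepoint‑free, let $Z:=\Proj\bigoplus_{\ell\ge0}H^0(X,\MO_X(\ell m_0D))$ and let $f\colon X\to Z$ be the associated contraction, so that $m_0D\sim f^{*}A$ for an ample Cartier divisor $A$ on the normal projective variety $Z$, which after replacing $m_0$ by a multiple we may take to be very ample; by Serre vanishing and the projection formula, $f_{*}\MO_X=\MO_Z$ and $H^0(X,\MO_X(\ell m_0D))=H^0(Z,\MO_Z(\ell A))$ for every $\ell\ge0$. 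If $\dim Z=0$ then $D\sim_{\Q}0$ and $D'=0$ works, so assume $\dim Z\ge1$. It follows that for $\ell\ge1$ every effective divisor in $|f^{*}(\ell A)|=|\ell m_0D|$ has the form $f^{-1}(H)$ with $H$ in the very ample complete linear system $|\ell A|$; for such $H$ the divisor $D':=\tfrac1{\ell m_0}f^{-1}(H)$ satisfies $D'\sim_{\Q}D$.

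Next I would pass to a log resolution $\mu\colon V\to X$ of $(X,\Delta)$. Write $K_V+B_V=\mu^{*}(K_X+\Delta)+E$ with $B_V\ge0$ supported on the simple normal crossing divisor $\Sigma_0:=\mu_{*}^{-1}\Delta+\Ex(\mu)$, all coefficients of $B_V$ in $[0,1]$ (resp.\ in $[0,1)$ in the klt case), and $E\ge0$ $\mu$‑exceptional with no common component with $B_V$; set $g:=f\circ\mu\colon V\to Z$. Since $f^{-1}(H)$ is Cartier, $\mu^{*}D'=\tfrac1{\ell m_0}g^{-1}(H)$ and $K_V+B_V+\mu^{*}D'=\mu^{*}(K_X+\Delta+D')+E$ with $E\ge0$; hence $(X,\Delta+D')$ is log canonical (resp.\ klt) as soon as $(V,\,B_V+\tfrac1{\ell m_0}g^{-1}(H))$ is. So the theorem follows once we prove that, \emph{for $\ell\gg0$ and $H$ a sufficiently general element of $|\ell A|$, the pair $(V,\,B_V+\tfrac1{\ell m_0}g^{-1}(H))$ is log canonical $($resp.\ klt$)$.}

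The hard part is this last assertion, which is a Bertini‑type theorem in characteristic $p$. Two obstructions absent in characteristic zero must be overcome: the classical Bertini smoothness theorem fails, and, worse, the contraction $f$—hence $g$—may be \emph{inseparable}, so that $g^{-1}(H)$ need not be smooth, nor even geometrically reduced, for \emph{any} $H$. My plan is to bound these defects uniformly, namely to show that for sufficiently general $H$ the multiplicity of $g^{-1}(H)$ at every point of $V$, and of $(g|_{\overline{T}})^{-1}(H)$ at every point of each stratum $\overline{T}$ of $\Sigma_0$, is at most a constant $C$ depending only on the morphism $X\to Z$ and on $[k:k^{p}]$. Both hypotheses enter here: $F$‑finiteness of $k$ makes the relevant inseparability degrees finite and uniformly bounded, which is what permits such a $C$ to exist; and the infinite perfect subfield $k_0$ both forces $k$ to be infinite (so that there are enough general $H$ in $|\ell A|$) and confines all inseparability to the finite extension $k/k_0$, so that the base change to $\overline{k}$ one is obliged to make—log canonicity being a geometric notion—does not push these multiplicities past $C$. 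Granting the bound, and since $(V,B_V)$ is log smooth, taking $\ell m_0>C$ forces every discrepancy of $(V,B_V+\tfrac1{\ell m_0}g^{-1}(H))$, and hence of $(X,\Delta+D')$, to remain $\ge-1$ (resp.\ $>-1$); one imposes the necessary general position along the finitely many strata of $\Sigma_0$ at once and intersects the resulting nonempty open conditions in $|\ell A|$. This uniform control of the inseparability is the crux of the argument and the step I expect to be most delicate.

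Finally, the klt case is subsumed in the statements above, every coefficient in play staying strictly below $1$; and the case $\K=\R$ follows from the $\Q$‑case by writing $D\sim_{\R}\sum_i r_iD_i$ with $r_i>0$ and $D_i$ basepoint‑free Cartier divisors, applying the $\Q$‑case to large multiples of each $D_i$, choosing the resulting perturbations $D_i'\sim_{\Q}D_i$ so that their total transforms on one common log resolution are simultaneously in general position, and setting $D':=\sum_i r_iD_i'\sim_{\R}D$; for the chosen multiples large enough, $B_V+\sum_i r_i\mu^{*}D_i'$ still has all coefficients at most (resp.\ less than) $1$, so $(X,\Delta+D')$ is log canonical (resp.\ klt).
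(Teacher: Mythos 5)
Your reduction steps (passing to the semiample fibration $f\colon X\to Z$ with $m_0D\sim f^*A$, transferring the problem to a log resolution $V$, and recovering the $\R$-case at the end) are fine, but the heart of your argument is missing. The statement you call the crux --- that for general $H\in|\ell A|$ the multiplicity of $g^*H$ at every closed point of $V$, and of its restriction to every stratum of the snc boundary, is bounded by a constant $C$ independent of $\ell$ --- is simply asserted (``granting the bound''), not proved, and it is exactly the kind of Bertini-type claim in characteristic $p$ that requires real work; moreover your description of how the hypotheses on $k$ would enter is not accurate (for instance, $k/k_0$ need not be a finite extension, e.g.\ $k=\overline{\F}_p(t)$, $k_0=\overline{\F}_p$, so inseparability is not ``confined to a finite extension''). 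There is a second unproved step layered on top of it: even granting the multiplicity bound, the inference ``$\mult_x\bigl(\tfrac{1}{\ell m_0}g^*H\big|_{\overline T}\bigr)\le 1$ on all strata $\overline T$ of $(V,\lfloor B_V\rfloor)$ implies $(V,B_V+\tfrac{1}{\ell m_0}g^*H)$ is log canonical'' is an inversion-of-adjunction-type statement. When $B_V$ has coefficient-one components, divisors $E$ with $a(E;V,B_V)=-1$ abound, and $\ord_E(g^*H)$ is not controlled by pointwise multiplicities without a genuine argument; in characteristic zero one would invoke inversion of adjunction or multiplier ideals, neither of which is available here, and you cannot check discrepancies on a log resolution of the new pair $(V,B_V+g^*H)$ because the theorem only assumes a log resolution of $(X,\Delta)$ exists. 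So as written the proposal has a genuine gap at both of its key steps.

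For comparison, the paper avoids any Bertini/multiplicity statement altogether. It works with $F$-singularities: after reducing to $X$ regular with snc $\Delta$, it proves (Propositions \ref{Theorem-F1} and \ref{Theorem-F}) that one can choose $p^{e_0}-1$ members $D_1',\dots,D_{p^{e_0}-1}'\in|D|$ such that any $\dim X+1$ of them have empty intersection and every subcollection of at most $\dim X$ of them corresponds to a point of a dense set $T(t,k_0)$ of rational points over which a fixed trace map $\Tr^{e_0}$ is surjective; the uniform $e_0$ comes from Proposition \ref{special-general}, a special-fiber-versus-general-fiber statement proved with $p$-bases and $F$-adjunction, and this is precisely where the hypotheses ``$F$-finite'' and ``contains an infinite perfect field'' are used (density of rational points that are $p^e$-th powers for all $e$), not to bound inseparability of the fibration. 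Then $(X,\Delta+\tfrac{1}{p^{e_0}-1}\sum D_i')$ is sharply $F$-pure (resp.\ strongly $F$-regular), hence log canonical (resp.\ klt) by Hara--Watanabe. If you want to salvage your route, you would have to supply both the uniform multiplicity bound and a characteristic-free, resolution-free proof of the lc criterion along strata; the paper's $F$-singularity route is designed exactly to sidestep these.
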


In characteristic zero, Theorem~\ref{Theorem-MMP} holds by Bertini's theorem 
for free divisors, which fails in positive characteristic. 
Our proof depends on the theory of $F$-singularities. 
More precisely, 
Theorem~\ref{Theorem-MMP} follows from 
Proposition~\ref{Theorem-F1} and Proposition~\ref{Theorem-F}.

\begin{prop}\label{Theorem-F1}
Fix $\K \in \{\Q, \R\}$. 
Let $k$ be an $F$-finite field containing an infinite perfect field $k_0$ of characteristic $p>0$. 
Let $(X, \Delta)$ be a projective strongly $F$-regular pair over $k$, where $\Delta$ is an effective $\K$-divisor. 
Let $D$ be a semi-ample $\K$-divisor on $X$. 
If $X$ is regular, 
then there exists an effective $\K$-divisor $D' \sim_{\K} D$ 
such that $(X, \Delta+D')$ is strongly $F$-regular. 
\end{prop}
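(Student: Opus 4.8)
The plan is to reduce to the base-point-free case and then perturb by a general member, invoking a Bertini-type statement for $F$-singularities over a field containing an infinite perfect field. First I would reduce to $\K=\Q$: a semi-ample $\R$-divisor is $\R$-linearly equivalent to a positive $\R$-linear combination of semi-ample $\Q$-divisors, and strong $F$-regularity of a pair is an open condition on the coefficients of the boundary, so fixing the general members below once and for all and then truncating the real coefficients to nearby rationals reduces the $\R$-case to the $\Q$-case. Assume then $\K=\Q$. Since $D$ is semi-ample there is a positive integer $m$ with $N:=mD$ Cartier and $|N|$ base-point free; fix such an $m$, and for a general member $D_1\in|N|$ put $D':=\tfrac1m D_1$, which is effective and satisfies $D'\sim_\Q D$.

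The whole content is to show that, for $D_1$ general, $(X,\Delta+D')$ is strongly $F$-regular; note this automatically forces the coefficients of $\Delta+D'$ to be $<1$, so there is no separate bookkeeping. Write $\tau(X,\Delta)$ for the (big) test ideal; strong $F$-regularity of $(X,\Delta)$ means exactly $\tau(X,\Delta)=\MO_X$. Let $\mathfrak{b}\subseteq\MO_X$ be the base ideal of $|N|$; base-point freeness gives $\mathfrak{b}=\MO_X$, hence $\tau(X,\Delta;\mathfrak{b}^{1/m})=\tau(X,\Delta;\MO_X^{1/m})=\tau(X,\Delta)=\MO_X$. The key input is the Bertini-type equality $\tau(X,\Delta+\tfrac1m D_1)=\tau(X,\Delta;\mathfrak{b}^{1/m})$ for $D_1\in|N|$ general; combined with the previous identity it yields $\tau(X,\Delta+D')=\MO_X$, i.e.\ $(X,\Delta+D')$ is strongly $F$-regular, which is the assertion. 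If the available form of this Bertini statement needs more sections, one simply replaces $N$ by $\ell N$ for $\ell\gg0$ and works with a general $D_\ell\in|\ell N|$, taking $D':=\tfrac1{\ell m}D_\ell$; the argument is unchanged.

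The main obstacle is precisely this Bertini-type equality in characteristic $p$. Bertini for smoothness fails here, so one cannot make $D_1$ smooth and fall back on a klt computation; instead one must argue directly with Frobenius splittings and test ideals, and this is where the hypotheses are used. The infinite perfect subfield $k_0\subseteq k$ supplies a workable notion of ``general member'' — one spreads $X$, $\Delta$ and $|N|$ out over a finitely generated $k_0$-subalgebra and passes to the divisor defined over a purely transcendental extension of $k_0$ — and the regularity of $X$ keeps these test-ideal manipulations under control. Granting this step, the remaining ingredients (the reduction to $\Q$, the passage $N\rightsquigarrow\ell N$, and $\mathfrak{b}=\MO_X$) are routine.
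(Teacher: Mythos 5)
Your reduction steps (to $\K=\Q$, to $D$ Cartier with $|mD|$ free, rescaling by $\ell\gg 0$) are fine and match the paper, but the step you label as ``the key input'' --- the Bertini-type equality $\tau(X,\Delta+\tfrac1m D_1)=\tau(X,\Delta;\mathfrak{b}^{1/m})$ for a \emph{single general} member $D_1\in|N|$ of a base-point-free system --- is not available in positive characteristic, and in the form you state it is actually false. The known Bertini theorem for $F$-singularities (Schwede--Zhang) requires \emph{very ample} systems; for a free system composed with a fibration there is no transversality to exploit. Concretely, let $f:X\to C$ be a quasi-elliptic fibration in characteristic $2$ and $N=f^*A$: then every member of $|N|$ is a sum of cuspidal fibers, and since the $F$-pure threshold of a cusp in characteristic $2$ is $1/2$, the pair $(X,\tfrac12 D_1)$ fails to be strongly $F$-regular for \emph{every} $D_1\in|N|$, while $\tau(X;\mathfrak{b}^{1/2})=\MO_X$. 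So no notion of ``general member'' (spreading out over $k_0$, passing to a purely transcendental extension, etc.) can rescue the single-member equality; the gap is not a missing citation but the entire content of the proposition. Your fallback of passing to $|\ell N|$ does repair this particular example, but it leaves you with exactly the same unproven Bertini-type statement for $|\ell N|$, and the genuine difficulty hiding there is uniformity: for each member one gets surjectivity of some trace map $\Tr^e$ with an exponent $e$ depending on the member, and to say anything about a ``general'' member you must control $e$ as the member varies over the parameter space. Nothing in your sketch addresses this.

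This is precisely where the paper's argument diverges. It never makes a single general member work; instead it (i) proves a special-fiber/general-fiber comparison for trace maps over $T=\mathbb A_k^{q\dim X}$ (Proposition~\ref{special-general}, via $p$-bases and rational points with coordinates in $k^{p^e}$), (ii) uses it together with quasi-compactness of $T(t,k_0)$ to extract one exponent $e_0$ that works simultaneously for all tuples of members parameterized by $k_0$-points, and then (iii) chooses $p^{e_0}-1$ members $D_1',\dots,D_{p^{e_0}-1}'$ of $|D|$ so that any $\dim X+1$ of them have empty intersection, setting $D'=\tfrac{1}{p^{e_0}-1}\sum D_i'$; locally only at most $\dim X$ of the $D_i'$ appear, so the uniform surjectivity applies, and strong $F$-regularity is concluded via the Schwede--Smith criterion with an auxiliary divisor $H$. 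If you want to salvage your outline you would need to replace the test-ideal Bertini equality by an argument of this ``many members with bounded local overlap plus uniform Frobenius exponent'' type; as written, the proposal assumes the theorem at its hardest point.
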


\begin{prop}\label{Theorem-F}
Let $k$ be an $F$-finite field containing an infinite perfect field $k_0$ of characteristic $p>0$. 
Let $X$ be a projective regular variety over $k$ and 
let $\Delta$ be an effective simple normal crossing $\Q$-divisor on $X$ 
whose coefficients are contained in $[0, 1]$. 
Let $D$ be a semi-ample $\Q$-divisor on $X$. 
Then there exists an effective $\Q$-divisor $D' \sim_{\Q} D$ 
such that $(X, \Delta+D')$ is sharply $F$-pure. 
\end{prop}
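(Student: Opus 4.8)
The plan is to take for $D'$ a small positive rational multiple of the pullback, under the morphism determined by $D$, of a general collection of hyperplanes, and to verify sharp $F$-purity at every point by Fedder's criterion, using that in positive characteristic a general member of a base-point-free linear system, although it may be singular (even non-reduced), has uniformly bounded multiplicity.

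First I would fix $m_0 > 0$ such that $m_0 D$ is an integral base-point-free Cartier divisor, let $\varphi \colon X \to \mathbb{P}^N_k$ be the morphism defined by the complete linear system $|m_0 D|$, and fix a hyperplane $H \subseteq \mathbb{P}^N$ with $\varphi^{*}H \sim m_0 D$; then every member of $|m_0 D|$ has the form $\varphi^{*}L$ for a hyperplane $L$. Since $k$ contains the infinite field $k_0$, it is infinite, so general hyperplanes are available, and I record three facts. (a) For a general hyperplane $L$, the divisor $\varphi^{*}L$ contains none of the finitely many subvarieties of $X$ arising as prime components of $\Delta$ or as irreducible components of intersections of such, because a general hyperplane of $\mathbb{P}^N$ avoids any fixed subvariety. (b) There is a constant $M$, depending only on $\varphi$, with $\mult_x(\varphi^{*}L) \le M$ for every $x \in X$ and every hyperplane $L$, since $(x,L)\mapsto \mult_x(\varphi^{*}L)$ is an upper semicontinuous, hence bounded, function on the noetherian universal incidence variety $\{\,(x,L):\varphi(x)\in L\,\}$. (c) $N+1$ general hyperplanes have empty common intersection, so if $L_1,\dots,L_k$ are general then every $x\in X$ lies on at most $N$ of the divisors $\varphi^{*}L_j$.

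Now, for an integer $k$ (to be chosen) and general hyperplanes $L_1,\dots,L_k$, set $D' := \tfrac{1}{k m_0}\,\varphi^{*}(L_1 + \cdots + L_k)$; this is an effective $\Q$-divisor with $D' \sim_{\Q} \tfrac{1}{k m_0}\,\varphi^{*}(kH) = \tfrac{1}{m_0}\varphi^{*}H \sim_{\Q} \tfrac{1}{m_0}(m_0 D) = D$. It remains to choose $k$ so that $(X,\Delta+D')$ is sharply $F$-pure, which I would verify at each $x\in X$. Completing $\MO_{X,x}$ and using that $\Delta$ is simple normal crossing, write $\widehat{\MO}_{X,x}\cong\kappa(x)[[t_1,\dots,t_n]]$ with $\Delta = \sum_{i=1}^{s} a_i\, V(t_i)$ locally, where $a_1=\cdots=a_r=1$ and $a_i<1$ for $r<i\le s$, and let $h_1,\dots,h_l$ with $l\le N$ be local equations of the $\varphi^{*}L_j$ passing through $x$. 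By Fedder's criterion, sharp $F$-purity of $(X,\Delta+D')$ at $x$ for the exponent $e$ is equivalent to
\[
\Bigl(\prod_{i=1}^{s} t_i^{\lceil a_i(p^e-1)\rceil}\Bigr)\Bigl(\prod_{j=1}^{l} h_j^{\lceil (p^e-1)/(k m_0)\rceil}\Bigr)\ \notin\ (t_1^{p^e},\dots,t_n^{p^e}).
\]
By (a), each $h_j$ lies outside the prime ideal $(t_1,\dots,t_r)$ (as $V(t_1,\dots,t_r)$ is one of the subvarieties in (a)), hence so does their product, so $\bigl(\prod_j h_j\bigr)^{\lceil (p^e-1)/(k m_0)\rceil}$ contains, with nonzero coefficient, a monomial $t^{\gamma}$ with $\gamma_1=\cdots=\gamma_r=0$ and $|\gamma|\le \lceil (p^e-1)/(k m_0)\rceil\cdot lM$ by (b). Multiplying $t^{\gamma}$ by $\prod_{i}t_i^{\lceil a_i(p^e-1)\rceil}$ produces a monomial occurring with nonzero coefficient in the left-hand side above whose $t_i$-exponent equals $p^e-1$ for $i\le r$, is at most $a_i(p^e-1)+1+\gamma_i$ for $r<i\le s$, and equals $\gamma_i$ for $i>s$; choosing $k$ large relative to $N$, $M$, $m_0$ and $\delta:=\min\{\,1-a_i : a_i<1\,\}$, and then $e$ large, all of these exponents are $<p^e$, so this monomial lies outside $(t_1^{p^e},\dots,t_n^{p^e})$. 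As all bounds are independent of $x$, one such $e$ works at every point of $X$, so $(X,\Delta+D')$ is sharply $F$-pure.

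The crux, and the point where the characteristic-zero argument (Bertini for free divisors) breaks down, is that $\varphi$ can be inseparable onto its image, so a general $\varphi^{*}L$ may fail to be smooth or even reduced and there is no hope of making $\Delta+D'$ log smooth. The argument circumvents this by using only the uniform multiplicity bound in (b), the freedom to shrink the coefficient $\tfrac{1}{k m_0}$ of $\varphi^{*}(L_1+\cdots+L_k)$ by enlarging $k$, and the elementary transversality in (a); sharp $F$-purity is robust enough that this suffices. The hypothesis that $k$ contains an infinite perfect field is what makes the requisite general members available over $k$, and is what the companion statements feeding into the main theorem genuinely require.
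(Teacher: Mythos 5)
Your overall strategy (pull back general hyperplanes, take the coefficient $\frac{1}{km_0}$ small, and check sharp $F$-purity pointwise by Fedder's criterion with a uniform multiplicity bound) is genuinely different from the paper's, but the decisive quantitative step fails as written. You deduce from (b) that $\bigl(\prod_j h_j\bigr)^{c}$, $c=\lceil (p^e-1)/(km_0)\rceil$, contains a monomial $t^{\gamma}$ with $\gamma_1=\cdots=\gamma_r=0$ \emph{and} $|\gamma|\le c\,lM$. Fact (b) bounds $\mult_x(\varphi^*L_j)$, i.e.\ the least total degree of \emph{some} monomial occurring in $h_j$; it says nothing about the least degree of a monomial avoiding $t_1,\dots,t_r$. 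For instance $h_j=t_1+t_{r+1}^{c_0}$ has multiplicity $1$, satisfies (a), yet its only monomial avoiding $t_1$ has degree $c_0$, and nothing in (a) or (b) prevents $c_0$ from growing as $x$ varies. The quantity you actually need to bound is the order of $h_j$ modulo $(t_1,\dots,t_r)$, i.e.\ $\mult_x$ of the restricted divisor $(\varphi|_Z)^*L_j$ on the stratum $Z=\{t_1=\cdots=t_r=0\}$ of $\Delta$, which the multiplicity of $\varphi^*L_j$ on $X$ does not control. Since the uniform choice of $k$ and $e$ rests entirely on this bound, the proof as written has a genuine gap at its crux.

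The gap is repairable with your own tools: strengthen (b) to a bound $M'$ on $\mult_x\bigl((\varphi|_Z)^*L\bigr)$ for every stratum $Z$ of $\Delta$ (each irreducible component of each intersection of prime components, together with $X$ itself), every $x\in Z$, and every hyperplane $L$ with $\varphi(Z)\not\subset L$; the same upper-semicontinuity-plus-noetherianity argument applies to each of the finitely many incidence varieties, and (a) guarantees that for your general $L_j$ these restrictions are honest Cartier divisors, so the order of $h_j$ modulo $(t_1,\dots,t_r)$ is at most $M'$ and, since $\kappa(x)[[t_{r+1},\dots,t_n]]$ is a domain, $|\gamma|\le c\,lM'$ as desired. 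With that correction the remaining Fedder computation is fine (minor points: treat the trivial case $\dim_k H^0(X,m_0D)=1$ separately, and note that replacing $\lceil (p^e-1)D'\rceil$ by the larger divisor $c\sum_j\varphi^*L_j$ only makes the test harder, hence is harmless). It is worth noting that your restriction-to-strata bound plays exactly the role that $F$-adjunction along the strata of $\Delta$ plays in the paper, whose proof instead spreads a sharply $F$-pure special member out over $\mathbb{A}^r_k$ via $p^e$-th power rational points and uses the coefficient $\frac{1}{p^{e_0}-1}$ with divisors in general position; your argument, once patched, is more elementary in that it avoids both \cite{F-adjunction} and the spreading-out Proposition~\ref{special-general}, at the price of the multiplicity estimates.
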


{\bf Idea of Theorem~\ref{Theorem-MMP}:} 
We overview the proof of Theorem~\ref{Theorem-MMP}. 
We only treat the case $\K=\Q$, $(X, \Delta)$ is klt, and $k$ is an infinite perfect field. 

Since there is a log resolution of $(X, \Delta)$, 
we may assume that $X$ is smooth and $\Delta$ is simple normal crossing. 
In this case, the notions of klt and strongly $F$-regular singularities coincide, 
hence it suffices to show Proposition~\ref{Theorem-F1} 
because strongly $F$-regular singularities are klt. 

We may assume that $D$ is base point free. 
By \cite[Theorem~B]{PSZ}, 
we can find a large integer $m \in \Z_{>0}$ such that 
$$\left(X, \Delta+\frac{1}{m}(D_1+\cdots+D_{\dim X})\right)$$ 
is strongly $F$-regular for every $D_i \in |D|$. 
Since $D$ is base point free, we can find members 
$$D_1, \cdots, D_m \in |D|$$
which satisfy the following property:  
\begin{enumerate}
\item[$(*)$]{the intersection $\bigcap_{j \in J}D_j$ is empty for every $|J|=\dim X+1$. }
\end{enumerate}
Set 
$$D':=\frac{1}{m}(D_1+\cdots+D_m) \sim_{\Q} D.$$
By $(*)$, for every point $x \in X$, 
there is an open subset $x \in U \subset X$ such that 
$$(U, (\Delta+D')|_U)=\left(U, \Delta|_U+\frac{1}{m}\sum_{i \in I} (D_i|_U)\right)$$
for some $I \subset \{1, 2, \cdots, m\}$ with $0\leq |I| \leq \dim X$. 
Therefore, $(X, \Delta+D')$ is strongly $F$-regular. 

If $k$ is not a perfect field, 
then we need to replace \cite[Theorem~B]{PSZ} in the above argument with Proposition~\ref{special-general}. 
Although the proofs of Proposition~\ref{Theorem-F1} and Proposition~\ref{Theorem-F} 
are very similar, the proof of Proposition~\ref{Theorem-F} needs the inversion of adjunction for $F$-singularities 
established by \cite{F-adjunction}. 

\medskip

{\bf Assumption on the base field:} 
Let us consider about the assumption that $k$ contains an infinite perfect field $k_0$. 
First, we impose a restrictive condition: $k$ is perfect. 
Then, our assumption $k_0 \subset k$ is equivalent to the condition that $k$ is an infinite perfect field. 
In this case, 
a key result (Proposition~\ref{special-general}) almost follows from \cite[Theorem~B]{PSZ}, 
however we need the arguments 
in the proofs of Proposition~\ref{Theorem-F1} and Proposition~\ref{Theorem-F}. 
For the case when $k$ is a finite field, 
our proof encounters a similar technical difficulty to the Bertini theorem for very ample divisors. 
Although the author does not know whether Theorem~\ref{Theorem-MMP} holds for finite fields, 
the method of \cite{Poonen} may help us. 


Second, let us go back to the general situation. 
In the proof of a key result (Proposition~\ref{special-general}), 
we will make use of rational points or its $p^e$-powers of $\mathbb A_k^r$. 
To assure the existence of infinitely many $p^{\infty}$-torsion points, 
we assume that $k$ contains an infinite perfect field. 

\medskip

{\bf Motivation:} 
Originally the motivation for Theorem~\ref{Theorem-MMP} is 
to show the relative klt/lc abundance theorem assuming the absolute abundance theorem. 
For example, given a morphism $\pi:(X, \Delta) \to S$ from a projective klt variety $(X, \Delta)$ to a normal variety $S$, 
if $K_X+\Delta$ is $f$-nef, then we see that $K_X+\Delta+\pi^*A_S$ is nef for some ample divisor $A_S$ 
up to the cone theorem. 
Since $\pi^*A_S$ is semi-ample, Theorem~\ref{Theorem-MMP} implies that 
there is an effective $\Q$-divisor $\Delta' \sim_{\Q} \Delta+\pi^*A_S$ 
such that $(X, \Delta')$ is klt and $K_X+\Delta$ is nef. 
Thus, we could reduce the problem to the absolute case. 
In this situation, our assumption on the base field $k$ is harmless because 
we can assume this as follows: first take the base change 
to the composite field of $k$ and $\overline{\mathbb F}_p$, 
and second we can find a subfield of $k$ 
which is finitely generated over $\overline{\mathbb F}_p$ and defines all the given varieties and morphisms. 

\medskip 

{\bf Related results:}
Recently, some problems on birational geometry are solved 
by the theory of $F$-singularities (\cite{CHMS}, \cite{CTX}, \cite{HX}, \cite{Mustata}). 
By using it, also this paper shows a result on birational geometry (Theorem~\ref{Theorem-MMP}). 
For some related topics of $F$-singularities, see \cite{BSTZ}, \cite{BST}. 
In particular, Bertini's theorem of very ample divisors holds for $F$-singularities (\cite{SZ}). 

\begin{ack}
The author would like to thank Professors 
Paolo Cascini, 
Mircea Musta\c{t}\u{a}, 
Zsolt Patakfalvi, 
Karl Schwede and 
Shunsuke Takagi 
for very useful comments and discussions. 
He would like to thank the referee 
for reading the manuscript carefully and for suggesting several improvements. 
The author was funded by EPSRC.
\end{ack}

\section{Notation}

We say $X$ is a {\em variety} over a field $k$ (or $k$-variety) if 
$X$ is an integral scheme which is separated and of finite type over $k$. 


Let $X$ be a normal variety over a field $k$. 
For a closed subset $Z$ of $X$, 
we say $Z$ is a {\em simple normal crossing divisor} 
if, for the irreducible decomposition $Z=\bigcup_{i \in I} D_i$, 
we obtain $\dim D_i=\dim X-1$ for every $i \in I$ and 
$\bigcap_{j \in J} D_j$ is regular for every subset $\emptyset \neq J \subset I$, 
where we consider $D_i$ as the reduced scheme and 
the intersection $\bigcap_{j \in J} D_j$ means a scheme-theoretic intersection.

Let $X$ be a normal $k$-variety and 
let $D$ be an effective $\R$-divisor. 
We say $\mu:W \to X$ is a {\em log resolution} of $(X, D)$ 
if $\mu$ is a projective birational morphism, 
$W$ is regular and $\mu^{-1}(D)\cup \Ex(\mu)$ 
is a simple normal crossing divisor.

We will freely use the notation and terminology in \cite{Kollar}. 
In particular, for the definitions of klt and log canonical singularities, 
see \cite[Definition~2.8]{Kollar}. 

For the definitions of strongly $F$-regular and sharply $F$-pure pairs, 
see \cite[Definition~2.7]{F-adjunction} and \cite[Definition~2.7]{CTX}. 
For $e\in \Z_{>0}$, a normal variety $X$ and an effective $\Z$-divisor $D$ on $X$, 
we have {\em a trace map} 
$$\Tr_X^e(D):F_*^e(\MO_X(-(p^e-1)K_X-D)) \to \MO_X.$$
For the definition and the basic properties of trace map, see \cite[Section~2.3]{CTX}.

We say an $\R$-Cartier $\R$-divisor $D$ is {\em semi-ample} 
if we can write $D=\sum_{1 \leq i\leq r} a_iD_i$, where 
$a_i \in \R_{\geq 0}$ and $D_i$ is a semi-ample Cartier divisor. 

For an $\mathbb F_p$-algebra $A$, 
we define $F_*A$ to be $A$ as an abelian group and we equip $F_*A$ 
with the $A$-module structure satisfying $a \cdot F_*x=F_*(a^px)$ 
for all $a, x \in A$, 
where we denote by $F_*y$ the same element as $y \in A$. 
A finite subset $\{x_1, \cdots, x_n\}$ of $A$ is said be a $p$-{\em basis} if 
the $A$-module homomorphism 
\begin{eqnarray*}
\bigoplus_{i=1}^n A &\to& F_*A,\\ 
(a_1, \cdots, a_n) &\mapsto& \sum_{i=1}^n a_i \cdot F_*(x_i)=F_*\left(\sum_{i=1}^n a_i^p x_i\right)
\end{eqnarray*}
is bijective.

\section{Proofs}

We recall a basic lemma. 

\begin{lem}\label{p-basis}
Let $A$ be an integral domain of characteristic $p>0$ and 
assume that $A$ has a $p$-basis $\{x_1, \cdots, x_n\}$. 
Then, $\{x_1, \cdots, x_n, t_1, \cdots, t_r\}$ is a $p$-basis of $A[t_1, \cdots, t_r]$. 
\end{lem}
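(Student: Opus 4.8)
The plan is to recall what it means for $\{x_1,\dots,x_n\}$ to be a $p$-basis of $A$ and then verify the defining property directly for the enlarged set inside $B:=A[t_1,\dots,t_r]$. Recall that $\{x_1,\dots,x_n\}$ being a $p$-basis of $A$ means that the monomials $x^{\mathbf a}=x_1^{a_1}\cdots x_n^{a_n}$ with each $0\le a_i\le p-1$ form a free basis of $A$ as a module over $A^p$. So I must show that the monomials $x^{\mathbf a} t^{\mathbf b}=x_1^{a_1}\cdots x_n^{a_n}t_1^{b_1}\cdots t_r^{b_r}$ with all exponents in $\{0,\dots,p-1\}$ form a free basis of $B$ over $B^p$. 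First I would identify $B^p=(A[t_1,\dots,t_r])^p=A^p[t_1^p,\dots,t_r^p]$, which is immediate since the Frobenius is a ring homomorphism and $A$ is a domain of characteristic $p$.

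Next, the generating statement: an arbitrary element of $B$ is an $A$-linear combination of monomials $t^{\mathbf c}$ in the $t_j$. Writing $\mathbf c=p\mathbf q+\mathbf b$ with $0\le b_j\le p-1$ coordinatewise gives $t^{\mathbf c}=(t^{\mathbf q})^p\, t^{\mathbf b}$, and expanding each $A$-coefficient in the $p$-basis of $A$ (so as an $A^p$-combination of the $x^{\mathbf a}$) shows that every element of $B$ lies in the $B^p$-span of the $x^{\mathbf a}t^{\mathbf b}$. For freeness (linear independence over $B^p$), suppose a $B^p$-linear relation $\sum_{\mathbf a,\mathbf b} c_{\mathbf a,\mathbf b}\, x^{\mathbf a}t^{\mathbf b}=0$ with $c_{\mathbf a,\mathbf b}\in B^p=A^p[t_1^p,\dots,t_r^p]$. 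Grouping by the class of the total $t$-exponent modulo $p$: each monomial $t^{\mathbf b}$ with $0\le b_j\le p-1$ multiplied by an element of $A^p[t_1^p,\dots,t_r^p]$ produces only $t$-monomials congruent to $\mathbf b$ modulo $p$ in every coordinate, so distinct $\mathbf b$ contribute to disjoint sets of $t$-monomials and cannot cancel each other. Hence for each fixed $\mathbf b$ we get $\sum_{\mathbf a} c_{\mathbf a,\mathbf b}\,x^{\mathbf a}=0$; viewing $A[t_1,\dots,t_r]$ as a free $A$-module on the $t$-monomials and equating the coefficient of each $t$-monomial reduces this to relations $\sum_{\mathbf a} (\text{element of } A^p)\, x^{\mathbf a}=0$ in $A$, which force all coefficients to vanish by the $p$-basis property of $A$. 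Therefore all $c_{\mathbf a,\mathbf b}=0$.

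I do not expect any serious obstacle here; the only point requiring a little care is the bookkeeping in the linear-independence step, namely checking that the ``congruence class of the exponent vector modulo $p$'' genuinely separates the contributions so that one may treat each $\mathbf b$ independently before invoking the $p$-basis property of $A$. Everything else is formal manipulation with Frobenius and polynomial rings over a domain of characteristic $p$.
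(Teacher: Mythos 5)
Your proof is correct and is essentially the same argument as the paper's: identifying $B^p=A^p[t_1^p,\dots,t_r^p]$, using that $A[t_1,\dots,t_r]$ is free as an $A$-module on the $t$-monomials, and separating contributions by the residue of the $t$-exponents modulo $p$ is exactly the paper's comparison of coefficients of $t^{pk+j}$, followed by the same appeal to the $p$-basis property of $A$. The only cosmetic difference is that you treat all $r$ variables at once, whereas the paper reduces to $r=1$ by induction.
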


\begin{proof}
We may assume that $r=1$ and set $t_1=:t$. 
Consider the following $A[t]$-module homomorphism
\begin{eqnarray*}
\varphi:\bigoplus_{0\leq i_{\ell}, j<p} A[t]\cdot x_1^{i_1}\cdots x_n^{i_n}t^j &\to& F_*(A[t]),\\
x_1^{i_1}\cdots x_n^{i_n}t^j &\mapsto& F_*(x_1^{i_1}\cdots x_n^{i_n}t^j),
\end{eqnarray*}
where the left hand side is the free $A[t]$-module generated 
by a basis $\{x_1^{i_1}\cdots x_n^{i_n}t^j\}_{0\leq i_{\ell}, j<p}$. 
The surjectivity is clear. 
We show the injectivity. 
Assume that 
$$\sum_{0\leq i_{\ell}, j<p} f_{I, j}(t)^px_1^{i_1}\cdots x_n^{i_n}t^j=0\,\,\,\,\, {\rm in}\,\,A[t],$$
for some $f_{I, j}(t)\in A[t]$ where $I:=(i_1, \cdots, i_n)$ is the multi-index. 
We show $f_{I, j}(t)=0$ for every $I$ and $j$. 
We can write $f_{I, j}(t)=\sum_k a_k^{I, j}t^k$ for some $a_k^{I, j} \in A$. 
We obtain 
\begin{eqnarray*}
\sum_{I, j} f_{I, j}(t)^px_1^{i_1}\cdots x_n^{i_n}t^j
&=&\sum_{I, j} \left(\sum_k a_k^{I, j}t^k\right)^px_1^{i_1}\cdots x_n^{i_n}t^j\\
&=&\sum_{I, j, k} \left(a_k^{I, j}\right)^px_1^{i_1}\cdots x_n^{i_n}t^{pk+j}\\
&=&0.
\end{eqnarray*}
Since every coefficient of $t^{pk+j}$ vanishes, 
we obtain 
$$\sum_{I} \left(a_k^{I, j}\right)^px_1^{i_1}\cdots x_n^{i_n}=0$$
for every $j$ and $k$. 
By the definition of $p$-basis, we obtain $a_k^{I, j}=0$ for every $I, j$ and $k$. 
Thus $f_{I, j}(t)=\sum_k a_k^{I, j}t^k=0$ for every $I$ and $j$. 
We are done. 
\end{proof}

The following lemma essentially follows from \cite{F-adjunction}.

\begin{lem}\label{adjunction}
Let $Y$ be a regular variety over an $F$-finite field of characteristic $p>0$. 
Let $S$ be a reduced simple normal crossing divisor
and let $S=\sum_{i\in I} S_i$ be the irreducible decomposition. 
Set $T:=\bigcap_{i \in I} S_i$. 
Let $E$ be an effective Cartier divisor on $X$ such that 
any irreducible component of $T$ is not contained in $\Supp E$. 
Fix a $($possibly non-closed$)$ point $y\in T$ and $e\in\mathbb Z_{>0}$. 
Then, the following assertions are equivalent. 
\begin{enumerate}
\item{$\Tr_Y^e((p^e-1)S+E)$ is surjective at $y$. }
\item{$\Tr_{T}^e(E|_T)$ is surjective at $y$. }
\item{The $\MO_{Y, y}$-module homomorphism 
$$\MO_{Y, y} \xrightarrow{F^e} F^e_*\MO_{Y, y} \hookrightarrow F^e_*(\MO_{Y, y}((p^e-1)S+E))$$
splits. 
}
\item{The $\MO_{T, y}$-module homomorphism 
$$\MO_{T, y} \xrightarrow{F^e} F^e_*\MO_{T, y} \hookrightarrow F^e_*(\MO_{T, y}(E|_T))$$
splits. }
\end{enumerate}
\end{lem}

\begin{proof}
It follows from \cite[Proposition 2.6]{CTX} that (1) and (3) (resp. (2) and (4)) are equivalent. 
Thus it suffices to show that (1) and (2) are equivalent. 
By the construction of the trace map, 
we obtain the following commutative diagram (cf. \cite[Lemma~2.6(1)]{trace-map}): 
$$\begin{CD}
F_*^e(\mathcal O_Y(-(p^e-1)(K_Y+S)-E))@>>{\rm surjection}>F_*^e(\mathcal O_T(-(p^e-1)K_{T}-E|_{T}))\\
@VV\Tr_Y^e((p^e-1)S+E)V @VV\Tr_T^e(E|_T)V\\
\mathcal O_Y@>\rho>{\rm surjection}>\mathcal O_{T}.
\end{CD}$$
If $\Tr_Y^e((p^e-1)S+E)$ is surjective at $y$, then 
so is $\Tr_T^e(E|_T)$ by a diagram chase. 
Thus (1) implies (2). 
Assume that $\Tr_T^e(E|_T)$ is surjective at $y$. 
By a diagram chase, 
there exists $a\in{\rm Im}(\Tr_Y^e((p^e-1)S+E))$ such that 
$\rho(a)=1\in\mathcal O_{T, y}.$ 
Then, we see $a-1\in I_T\MO_{Y, y}\subset \mathfrak m_y\mathcal O_{Y, y}.$ 
This implies that $a$ is a unit of $\MO_{Y, y}$. 
Thus (2) implies (1). 
\end{proof}

Since we would like to treat rational points of $\mathbb A_k^r$ and their $p^e$-powers, 
we introduce the following terminologies. 

\begin{dfn}\label{def-coordinate}
Let $k$ be a field. 
A {\em coordinate} $t:=[t_1, \cdots, t_r]$ of $\mathbb A_k^r$ is 
a set of elements of $\MO_{\mathbb A_k^r}(\mathbb A_k^r)$ which satisfies 
$k[t_1, \cdots, t_r]=\MO_{\mathbb A_k^r}(\mathbb A_k^r)$. 
This induces a bijection: 
$$\theta:k^r \to \mathbb A_k^r(k),\,\,\, (c_1, \cdots, c_r) \mapsto (t_1-c_1, \cdots, t_r-c_r).$$
For a subfield $k_0 \subset k$, 
we define $\mathbb A_k^r(t, k_0):=\theta(k_0^r)$. 
\end{dfn}

\begin{rem}\label{translation}
The set $\mathbb A_k^r(t, k_0)$ is a subset of $\mathbb A_k^r(k)$, 
which depends on the choice of a coordinate $t=[t_1, \cdots, t_r]$. 
Actually, for every $a \in \mathbb A_k^r(k)$ and every subfield $k_0 \subset k$, 
we can find a coordinate $t=[t_1, \cdots, t_r]$ such that $a \in \mathbb A_k^r(t, k_0)$. 
On the other hand, we obtain 
$\mathbb A_k^r(t, k_0)=\mathbb A_k^r(t', k_0)$ 
for another coordinate $t'=[t'_1, \cdots, t'_r]$ such that $t'_i=t_i-c_i$ with $c_i \in k_0$. 
\end{rem}



Proposition~\ref{special-general} is a key result in this paper, 
which compares a special fiber and general fibers. 
For the case where $k$ is perfect, Proposition~\ref{special-general} is very similar to \cite[Theorem~B]{PSZ}.

\begin{prop}\label{special-general}
Let $k$ be an $F$-finite field of characteristic $p>0$. 
Let $X$ be a regular variety over $k$. 
We fix the followings. 
\begin{itemize}
\item{$e\in\mathbb Z_{>0}$.}
\item{$T:=\mathbb A^r_k$. }
\item{An effective Cartier divisor $E$ on $X\times_k T$ such that 
$\Supp\,E$ contains no fibers of the second projection $X\times_k T \to T$.}
\item{A closed point $x\in X$ and a rational point $a \in T(k)$. }
\item{A coordinate $t=[t_1, \cdots, t_r]$ of $T$ such that $a \in T(t, k^{p^e})$ 
(cf. Definition~\ref{def-coordinate}).}
\end{itemize}
If $\Tr^e_{X\times_k \{a\}}(E|_{X\times_k\{a\}})$ is surjective at $(x, a)$, 
then there exists an open subset $(x ,a)\in U$ of $X\times_k T$ 
such that
$\Tr^e_{X\times \{b\}}(E|_{X\times_k\{b\}})$ is surjective at $(y, b)$ 
for every closed point $(y, b)\in U$ with a closed point $y \in X$ and $b \in T(t, k^{p^e})$. 
\end{prop}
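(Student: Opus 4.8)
The plan is to build, on $X\times_k T$, a single coherent ideal sheaf $\mathcal J$ such that for a closed point $(y,b)$ with $b\in T(t,k^{p^e})$ the map $\Tr^e_{X\times_k\{b\}}(E|_{X\times_k\{b\}})$ is surjective at $(y,b)$ if and only if $\mathcal J\not\subseteq\m_{(y,b)}$; then $U:=(X\times_k T)\setminus V(\mathcal J)$ is open, contains $(x,a)$ by the hypothesis, and is the required neighbourhood (if $X$ has been shrunk, this $U$ is still open in the original product). The restriction to $T(t,k^{p^e})$ gets forced on us because building $\mathcal J$ amounts to replacing the equations $t_i-b_i$ of the fibre by $p^e$-th powers.

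First I would localise. Since the statement concerns a neighbourhood of $(x,a)$ and $X$ is regular and $F$-finite, I may shrink $X$ so that $X=\Spec A$ is affine, $A$ carries a finite $p$-basis $\{x_1,\dots,x_n\}$ (so $\omega_X$ is trivialised by $dx_1\wedge\cdots\wedge dx_n$), and $E$ is cut out by a single $f\in A[t_1,\dots,t_r]=\Gamma(X\times_k T,\MO_{X\times_k T})$; by Lemma~\ref{p-basis}, $\{x_1,\dots,x_n,t_1,\dots,t_r\}$ is a $p$-basis of $A[t]$. For $b=\theta(b_1,\dots,b_r)\in T(k)$ (cf.\ Definition~\ref{def-coordinate}) the fibre $X\times_k\{b\}$ is canonically isomorphic to $X$, the hypothesis on $\Supp E$ gives $\bar f_b:=f|_{t=b}\neq 0$ in $A$, and $E|_{X\times_k\{b\}}=\operatorname{div}\bar f_b$. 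By the standard local description of the trace map (\cite[Section~2.3]{CTX}), writing each $h\in A$ uniquely as $h=\sum_{0\leq\alpha_i<p^e}h_{(\alpha)}^{p^e}x^\alpha$ and setting $[h]:=h_{(p^e-1,\dots,p^e-1)}$, the map $\Tr^e_{X\times_k\{b\}}(E|_{X\times_k\{b\}})=\Tr^e_X(\operatorname{div}\bar f_b)$ is surjective at $y\in X$ if and only if $[\bar f_b\,\bar g]$ is a unit of $\MO_{X,y}$ for some $\bar g\in A$.

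The key point is an identity that makes this fibrewise condition polynomial in $b$, provided $b$ is a $p^e$-th power. For $g\in A[t]$, expand $fg=\sum_{\alpha,\beta}(fg)_{\alpha\beta}^{p^e}x^\alpha t^\beta$ in the $p$-basis of $A[t]$ and set $P_g:=\sum_\beta t^\beta\big((fg)_{(p^e-1),\beta}\big)^{p^e}\in A[t]$. If $b\in T(t,k^{p^e})$, say $b_i=c_i^{p^e}$ with $c_i\in k$, then substituting $t\mapsto b$ and using additivity of the Frobenius together with $b^\beta=(\prod_i c_i^{\beta_i})^{p^e}$ one finds that the coefficient of $x^{(p^e-1)}$ in $(fg)|_{t=b}=\bar f_b\cdot(g|_{t=b})$ is $\sum_\beta(\prod_i c_i^{\beta_i})(fg)_{(p^e-1),\beta}|_{t=b}$, whence $\big([\bar f_b\,(g|_{t=b})]\big)^{p^e}=\sum_\beta b^\beta\big((fg)_{(p^e-1),\beta}|_{t=b}\big)^{p^e}=P_g|_{t=b}$. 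Combining this with the previous paragraph and the injectivity of $v\mapsto v^{p^e}$ on residue fields: for $b\in T(t,k^{p^e})$ and a closed point $y\in X$, the map $\Tr^e_{X\times_k\{b\}}(E|_{X\times_k\{b\}})$ is surjective at $(y,b)$ if and only if $P_g\notin\m_{(y,b)}$ for some $g\in A[t]$ (for ``$\Rightarrow$'' take $g$ to be a constant lift of a suitable $\bar g\in A$; for ``$\Leftarrow$'' put $\bar g:=g|_{t=b}$).

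To conclude, let $\mathcal J\subseteq\MO_{X\times_k T}$ be the ideal generated by $\{P_g:g\in A[t]\}$; since $A[t]$ is Noetherian this is a coherent ideal sheaf, so $U:=(X\times_k T)\setminus V(\mathcal J)$ is open. Applying the last equivalence with $b=a$ (legitimate, as $a\in T(t,k^{p^e})$), the hypothesis that $\Tr^e_{X\times_k\{a\}}(E|_{X\times_k\{a\}})$ is surjective at $(x,a)$ gives $(x,a)\in U$; and if $(y,b)\in U$ is a closed point with $y\in X$ closed and $b\in T(t,k^{p^e})$, then some $P_g$ is a unit at $(y,b)$, so $\Tr^e_{X\times_k\{b\}}(E|_{X\times_k\{b\}})$ is surjective at $(y,b)$. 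The only real work is the identity of the third paragraph --- reading the trace map off as the top-coefficient functional in a $p$-basis, and checking that after passage to $p^e$-th powers the fibrewise condition is controlled by the genuinely polynomial family $g\mapsto P_g$. It is exactly the factorisation $b_i=c_i^{p^e}$ that turns $b^\beta$ into a $p^e$-th power and lets additivity of the Frobenius collapse the sum, so the hypothesis $b\in T(t,k^{p^e})$ --- and hence the infinite perfect subfield $k_0$, which provides an abundance of such points --- is essential to this line of argument.
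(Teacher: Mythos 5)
Your argument is correct, and its engine is the same as the paper's: expand the equation of $E$ in the $p$-basis $\{x_1,\dots,x_n,t_1,\dots,t_r\}$ of $A[t]$ (Lemma~\ref{p-basis}), and observe that for $b\in T(t,k^{p^e})$ the relevant fibrewise $p$-basis coefficient is, after raising to the $p^e$-th power, the value at $b$ of an honest element of $A[t]$ --- exactly the identity $\xi_{I,0}(b)^{p^e}=\sum_J\xi_{I,J}^{p^e}b^J$ that drives the paper's proof and explains the hypothesis $b\in T(t,k^{p^e})$. Where you genuinely diverge is in the packaging. The paper first translates $a$ to the origin, passes from the fibre trace to the ambient trace $\Tr^e_{X\times T}((p^e-1)\sum_i\operatorname{div}(t_i-b_i)+E)$ via the $F$-adjunction statement (Lemma~\ref{adjunction}), extracts from the resulting splitting a single index $I'$ with $\xi_{I',0}\notin\m_{(x,a)}$, and takes $U=D(\eta_{I',0}(t))$. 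You bypass Lemma~\ref{adjunction} entirely by characterizing surjectivity of the fibre trace directly through the top-coefficient functional and the substitution $t\mapsto b$, and instead of one distinguished coefficient you form the coherent ideal $\mathcal J=(P_g:g\in A[t])$, getting an ``if and only if'' description of the good locus among $T(t,k^{p^e})$-fibres; this is a bit more uniform (no choice of $I'$, no translation of $a$), at the cost of invoking the Fedder-type local description of the trace map, which is the same standard input the paper uses implicitly when it reads off surjectivity from the coefficients $\xi_{I,0}(b)$. Two small points to tighten: shrinking $X$ alone makes $E$ principal on $X'\times T$ only because $X$ is regular (so $\operatorname{Pic}(X'\times\mathbb A^r)\cong\operatorname{Pic}(X')$ and one can shrink further to trivialize); alternatively, invert an element $\rho$ of $A\otimes_kR_T$ as the paper does --- your construction of $\mathcal J$ and $U$ works verbatim over $(A\otimes_kR_T)[\rho^{-1}]$. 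Also, for a regular but non-smooth $k$-variety the phrase ``$\omega_X$ is trivialised by $dx_1\wedge\cdots\wedge dx_n$'' should be replaced by the statement you actually use, namely that the $p$-basis gives the free decomposition $A=\bigoplus_{0\le\alpha<p^e}A^{p^e}x^\alpha$ and hence the top-coefficient generator of $\Hom_A(F^e_*A,A)$.
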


\begin{proof}
We may assume that 
$$X=\Spec\,R_X,\,\,\,\, T=\Spec\,k[t_1, \cdots, t_r]=\Spec\,R_T.$$
Moreover, we can assume 
that $a=(t_1, \cdots, t_r) \subset R_T$, that is, $a$ is the origin (Remark~\ref{translation}). 
By shrinking $\Spec\,R_X$ around $x$, we may assume that 
$R_X$ has a $p$-basis $\{x_1, \cdots, x_n\}$: 
$$R_X=\bigoplus_{0\leq i_{\alpha} < p^e} R_X^{p^e}x_1^{i_1}\cdots x_n^{i_n}.$$
Take an element $0\neq \rho \in R_X\otimes_k R_T$ which satisfies the following properties.  
\begin{enumerate}
\item{$(x, a)\in \Spec((R_X\otimes_k R_T)[\rho^{-1}])$. }
\item{$E|_{\Spec((R_X\otimes_k R_T)[\rho^{-1}])}={\rm div}(f)$ where $f\in (R_X\otimes R_T)[\rho^{-1}]$.}
\end{enumerate}
Set 
$$S:=(R_X\otimes_k R_T)[\rho^{-1}].$$
Let $S_i$ on $X\times_k T$ be the pull-back of the regular effective Cartier divisor ${\rm div}(t_i)$ on $T$. 
By Lemma~\ref{adjunction}, 
we see that the trace map 
$$\Tr^e_{X\times T}\left((p^e-1)\sum_{i=1}^rS_i+E\right)$$ 
is surjective at $(x, a).$ 
Shrinking $\Spec\,S$ around $(x, a)$ if necessary, 
we obtain the following splitting by Lemma~\ref{adjunction}: 
$${\rm id}_S:S\to F_*^eS\xrightarrow{\times f(t_1\cdots t_r)^{p^e-1}} F_*^eS
\xrightarrow{\varphi}S.$$
Again by Lemma~\ref{adjunction}, it suffices to show the splitting of 
$$S\to F_*^eS\xrightarrow{\times f((t_1-b_1)\cdots (t_r-b_r))^{p^e-1}} F_*^eS$$
for general $(b_1, \cdots, b_r) \in (k^{p^e})^r$. 

Since 
$$\{x_1, \cdots, x_n, t_1-b_1, \cdots, t_r-b_r\}$$ 
is a $p$-basis of $R_X\otimes_k k[t_1, \cdots, t_r]$ for every $b_i\in k$ (Lemma~\ref{p-basis}), 
it is also a $p$-basis of 
$$(R_X\otimes_k k[t_1, \cdots, t_r])[\rho^{-1}]=(R_X\otimes_k R_T)[\rho^{-1}]=S.$$ 
For every $b:=(b_1, \cdots, b_r)\in k^r$, we obtain the following expressions 
\begin{eqnarray*}
f&=&\sum_{0\leq i_{\alpha}<p^e, 0\leq j_{\beta}<p^e} \xi_{I, J}^{p^e}(x_1^{i_1}\cdots x_n^{i_n}t_1^{j_1}\cdots t_r^{j_r})\\
&=&\sum_{0\leq i_{\alpha}<p^e, 0\leq j_{\beta}<p^e} \xi_{I, J}(b)^{p^e}(x_1^{i_1}\cdots x_n^{i_n}(t_1-b_1)^{j_1}\cdots (t_r-b_r)^{j_r})
\end{eqnarray*}
where $\xi_{I, J}$, $\xi_{I, J}(b) \in S$, $I:=(i_1,\cdots, i_n)$, and $J:=(j_1, \cdots, j_r)$. 
Clearly, $\xi_{I, J}(0)=\xi_{I, J}$. 
By applying the binomial expansion, if $b_{\ell} \in k^{p^e}$, then we can write 
$$t_{\ell}^{j_{\ell}}=((t_{\ell}-b_{\ell})+b_{\ell})^{j_{\ell}}=\sum_{\nu} c_{\ell, j_{\ell}, \nu}^{p^e}(t_{\ell}-b_{\ell})^{\nu}$$
for some $c_{\ell, j_{\ell}, \nu} \in k$ such that $c_{\ell, j_{\ell}, 0}=b_{\ell}^{j_{\ell}}.$ 
Therefore, we obtain  
$$\xi_{I, 0}(b)^{p^e}=\sum_{0\leq j_{\beta} < p^e} \xi_{I, J}^{p^e}b_1^{j_1}\cdots b_r^{j_r}$$
for every $I$ and $b=(b_1, \cdots, b_r)\in (k^{p^e})^r$.

\begin{claim}\label{c1}
For any point $y \in \Spec\,S$ and any $k$-rational point $b \in T(k)$, 
if $g$ is the image of $f$ by 
$$S=(R_X \otimes_k R_T)[\rho^{-1}] \to (R_X \otimes_k (R_T/\m_b))[\rho^{-1}] \to \MO_{X \times\{b\}, (b, y)} 
\xrightarrow{\simeq} \MO_{X, y},$$
then the following are equivalent. 
\begin{enumerate}
\item[(i)] $\Tr^e_{X\times \{b\}}(E|_{X\times_k\{b\}})$ is surjective at $(y, b)$. 
\item[(ii)] The $\MO_{X, y}$-module homomorphism 
$$\MO_{X, y} \to F_*^e \MO_{X, y} \xrightarrow{\times g} F^e_*\MO_{X, y}$$
splits. 
\item[(iii)] There exists an index $I=(i_1, \cdots, i_n)$ with $0 \leq i_{\alpha}<p^e$ 
such that $\xi_{I, 0}(b) \not\in \m_{(y, b)}$. 
\end{enumerate}
\end{claim}
\begin{proof}[Proof of Claim \ref{c1}]
Thanks to Lemma~\ref{adjunction}, (i) and (ii) are equivalent. 
Since $\{x_1, \cdots, x_n\}$ is a $p$-basis of $\MO_{X, y}$, we get  
\begin{eqnarray*}
F^e_*\MO_{X, y}&=&\bigoplus_{0 \leq i_{\alpha}<p^e} \MO_{X, y}\cdot x_1^{i_1} \cdots x_n^{i_n}\\
g&=&\bigoplus_{0 \leq i_{\alpha}<p^e}(\xi_{I, 0}(b) \mod \m_b)^{p^e} \cdot x_1^{i_1} \cdots x_n^{i_n}
\end{eqnarray*}
which implies that (ii) is equivalent to (iii). 
This completes the proof of Claim \ref{c1}. 
\end{proof}

Consider the following polynomial 
$$\eta_{I, 0}(z_1, \cdots, z_r):=
\sum_{0\leq j_{\beta}< p^e} \xi_{I, J}^{p^e}z_1^{j_1}\cdots z_r^{j_r} \in S[z_1, \cdots, z_r].$$
For $b=(b_1, \cdots, b_r)\in (k^{p^e})^r$, 
we obtain 
$$\eta_{I, 0}(b_1, \cdots, b_r)=\sum_{0\leq j_{\beta}< p^e} \xi_{I, J}^{p^e}b_1^{j_1}\cdots b_r^{j_r}=\xi_{I, 0}(b)^{p^e}.$$
Recall 
$$t_i \in k[t_1, \cdots, t_r]=R_T \subset R_X\otimes_k R_T \subset (R_X\otimes_k R_T)[\rho^{-1}]=S,$$
where we write $t_i=1\otimes_k t_i \in S$ by abuse of notation. 
Thus, we obtain 
$$\eta_{I, 0}(t_1, \cdots, t_r) \in S.$$
Then, for a closed point $y \in X$ and $b=(b_1, \cdots, b_r) \in (k^{p^e})^r$ 
(i.e. $b$ corresponds to the maximal ideal $(t_1-b_1, \cdots, t_r-b_r)$), 
the following three assertions are equivalent. 
\begin{itemize}
\item{$\eta_{I, 0}(t_1, \cdots, t_r) \not\in \m_{(y, b)}$.}
\item{$\eta_{I, 0}(b_1, \cdots, b_r) \not\in \m_{(y, b)}$.}
\item{$\xi_{I, 0}(b) \not\in \m_{(y, b)}$. }
\end{itemize}

Since $\varphi$ gives the above splitting, 
Claim \ref{c1} enables us to find a multi-index $I'=(i_1',\cdots, i_n')$ such that 
$\xi_{I', 0}=\xi_{I', 0}(0)\not\in\mathfrak m_{(x, a)}$. 
This implies  
$$\eta_{I', 0}(t_1, \cdots, t_r) \not\in \m_{(x, a)}.$$ 
Thus, we can find an open set $(x, a) \in U \subset \Spec\,S$, 
such that 
$$\eta_{I', 0}(t_1, \cdots, t_r) \not \in \p$$
for every prime ideal $\p \in U$. 
Therefore, for every $(y, b)\in U$ where $y \in X$ is a closed point and $b\in T(k^{p^e})$, 
we obtain $\xi_{I', 0}(b) \not\in \m_{(y, b)}$. 
It follows from Claim \ref{c1} that $\Tr^e_{X\times \{b\}}(E|_{X\times_k\{b\}})$ is surjective at such $(y, b)$. 
\end{proof}

\begin{proof}[Proof of Proposition~\ref{Theorem-F1}]
First, we reduce the proof to the case $\K=\Q$. 
By enlarging coefficients of $\Delta$, we may assume that $\Delta$ is a $\Q$-divisor 
(cf. \cite[Remark~2.8(2)]{CTX}). 
We can write $D=\sum_{1\leq i\leq s} a_iD_i$, where $a_i\in\R_{\geq 0}$ and 
each $D_i$ is a semi-ample Cartier divisor. 
By induction on $s$, we may assume that $s=1$. 
Thus we obtain $D=a_1D_1$. 
By replacing $a_1$ with $\ulcorner a_1 \urcorner$, we may assume that $D$ is a semi-ample Cartier divisor. 
Thus, we could reduce the proof to the case $\K=\Q$. 

From now on, we assume that $\K=\Q$ and 
we show the assertion in the proposition. 
\begin{claim}\label{c-2}
There exists an effective $\Q$-divisor $H$ on $X$ 
satisfying the following properties. 
\begin{itemize}
\item{$(X, \Delta+H)$ is strongly $F$-regular.}
\item{$\Supp \Delta \subset \Supp H$.}
\item{$(X\setminus \Supp H, \Delta|_{X\setminus \Supp H}=0)$ is globally $F$-regular.}
\end{itemize}
\end{claim}
\begin{proof}[Proof of Claim \ref{c-2}]
Fix a closed point $x \in X \setminus \Supp \Delta$ around which $X$ is regular. 
There exists an affine open neighborhood $U$ of $x \in X$ such that $(U, 0)$ is globally $F$-regular. 
Take an effective $\Z$-divisor $H_1$ on $X$ such that $\Supp \Delta  \subset \Supp H_1$ and 
that $X \setminus \Supp H_1 \subset U$. 
We can find a small positive rational number $\epsilon$ such that 
for $H:=\epsilon H_1$, the pair $(X, \Delta+H)$ is strongly $F$-regular (cf. \cite[Remark 2.8]{CTX}). 
This completes the proof of Claim \ref{c-2}.
\end{proof}

By enlarging coefficients of $\Delta$ and $H$ a little, 
we may assume that $(p^{d_0}-1)\Delta$ and $(p^{d_0}-1)H$ are $\Z$-divisors for some $d_0 \in \Z_{>0}$. 
By replacing $D$ with its some multiple, 
we may assume that $D$ is a Cartier divisor such that $|D|$ is base point free. 
In particular, $\dim_kH^0(X, D) \geq 2$, otherwise $D \sim 0$ and the assertion is trivial.

Set $T_1:=\mathbb A_k^q=\mathbb A(H^0(X, D))$ (i.e. $q:=\dim_kH^0(X, D)$) and 
$$T:=(T_1)^{\dim X}=\mathbb A_k^q \times_k \cdots \times_k \mathbb A_k^q \simeq \mathbb A_k^{q\dim X}.$$
We can find an effective Cartier divisor $\mathcal D$ on $X \times_k T$ 
which satisfies the following properties: 
\begin{itemize}
\item{Each $k$-rational point $a\in (\mathbb A_k^q(k))^{\dim X} = T(k)$ 
corresponds to a pair $(D_1, \cdots, D_{\dim X})$ 
where, for every $i$, we obtain $D_i \sim D$ is an effective Cartier divisor. 
In this case, we write $a=[D_1, \cdots, D_{\dim X}]$. }
\item{$\mathcal D|_{X\times_k\{[D_1, \cdots, D_{\dim X}]\}}=D_1+\cdots+D_{\dim X}.$}
\end{itemize}
Fix a coordinate $t=(t_1, \cdots, t_{q\dim X})$ of $T$. 
Note that we have a subset $T(t, k_0) \subset T(k)$, 
which is dense in $T$ (cf. Definition~\ref{def-coordinate}). 

We show that there exists $e_0 \in d_0\Z_{>0}$ 
such that the trace map 
$$\Tr_X^{e_0}((p^{e_0}-1)(\Delta+H)+2(D_1+\cdots+D_{\dim X}))$$
is surjective for every $[D_1, \cdots, D_{\dim X}] \in T(t, k_0)$. 
Fix $a=[D_1, \cdots, D_{\dim X}] \in  T(t, k_0)$. 
Then, we can find $d_a \in \Z_{>0}$ such that 
$$\left(X, \Delta+H+\frac{2}{p^{d_a}-1}(D_1+\cdots+D_{\dim X})\right)$$ 
is sharply $F$-pure. 
In particular, we can find $e_a \in d_0d_a\Z_{>0}$ 
such that the trace map 
$$\Tr_X^{e_a}\left((p^{e_a}-1)\left(\Delta+H+\frac{2}{p^{d_a}-1}(D_1+\cdots+D_{\dim X})\right)\right)$$
$$=\Tr^{e_a}_{X\times \{a\}}\left((p^{e_a}-1)\left(\Delta+H+\frac{2}{p^{d_a}-1}\mathcal{D}|_{X\times \{a\}}\right)\right)$$
is surjective. 
By Proposition~\ref{special-general} and the properness of $X$, 
we can find an open subset $a\in U_a \subset T$ such that 
$$\Tr^{e_a}_{X\times \{b\}}\left((p^{e_a}-1)\left(\Delta+H+\frac{2}{p^{d_a}-1}\mathcal{D}|_{X\times \{b\}}\right)\right)$$ 
is surjective for every $\{b\} \in U_a \cap T(t, k_0)$. 
In particular, 
$$\left(X\times \{b\}, \Delta+H+\frac{2}{p^{d_a}-1}\mathcal{D}|_{X\times \{b\}}\right)$$ 
is sharply $F$-pure and 
$$\Tr^{ee_a}_{X\times \{b\}}((p^{ee_a}-1)(\Delta+H)+2\mathcal{D}|_{X\times \{b\}}))$$
is surjective for every $e \in \Z_{>0}$ and every $b \in U_a \cap T(t, k_0)$. 
Therefore, we obtain an open cover 
$$T(t, k_0) \subset \bigcup_{a \in T(t, k_0)} U_a.$$
By the compactness of $T(t, k_0)$, 
we obtain $T(t, k_0) \subset \bigcup_{1\leq i\leq s} U_{a_i}$. 
Set $e_0:=e_{a_1}\cdots e_{a_s}$. 
Then, we see that the trace map  
$$\Tr^{e_0}_{X\times \{c\}}((p^{e_0}-1)(\Delta+H)+2\mathcal{D}|_{X\times \{c\}}))$$
is surjective for every $c \in T(t, k_0)$.


\medskip

Since $|D|$ is base point free and $T(t, k_0)$ is dense in $T$, 
we can find effective Cartier divisors 
$$D_1', \cdots, D_{p^{e_0}-1}' \sim D$$ 
which satisfy the following properties. 
\begin{itemize}
\item{$[D_i'] \in T_1(k)=\mathbb A_k^q(k)$ for every $1\leq i\leq p^{e_0}-1$.}
\item{For every $\{i_1, \cdots, i_{\dim X}\} \subset \{1, 2, \cdots, p^{e_0}-1\}$ 
with $i_1<i_2<\cdots<i_{\dim X}$, 
the point $[D_{i_1}', \cdots, D_{i_{\dim X}}']\in T(k)$ is contained in $T(t, k_0)$.}
\item{$\bigcap_{j\in J} D_j'=\emptyset$ 
for every subset $J \subset \{1, 2, \cdots, p^{e_0}-1\}$ with $|J|=\dim X+1$. }
\end{itemize}
We obtain 
$$D \sim_{\Q} \frac{1}{p^{e_0}-1}(D_1'+\cdots+D_{p^{e_0}-1}')=:D'$$
and 
$$\Tr_X^{e_0}((p^{e_0}-1)(\Delta+H+2D'))$$
is surjective at every point. 
By \cite[Theorem~3.9]{SS}, in order to show that the pair $(X, \Delta+D')$ is strongly $F$-regular, 
it is enough to prove the following two assertions. 
\begin{enumerate}
\item[(a)]{$\Tr_X^{e_0}((p^{e_0}-1)(\Delta+D')+(p^{e_0}-1)H+(p^{e_0}-1)D')$ is surjective.}
\item[(b)]{$(X \setminus \Supp(H+D'), (\Delta+D')|_{X\setminus \Supp(H+D')})$ is globally $F$-regular.}
\end{enumerate}
The assertion (a) follows from  
$$(p^{e_0}-1)(\Delta+D')+(p^{e_0}-1)H+(p^{e_0}-1)D'=(p^{e_0}-1)(\Delta+H+2D').$$
The assertion (b) holds 
because $(X \setminus \Supp H, \Delta|_{X\setminus \Supp H})$ is globally $F$-regular. 
This completes the proof of Proposition~\ref{Theorem-F1}
\end{proof}

The proof of Proposition~\ref{Theorem-F} is 
almost all the same as the one of Proposition~\ref{Theorem-F1}. 
For the sake of completeness, we give a proof of it. 

\begin{proof}[Proof of Proposition~\ref{Theorem-F}]
By enlarging coefficients of $\Delta$, 
we may assume that $\Delta$ is reduced, that is, $\Delta=\llcorner \Delta\lrcorner$. 
By replacing $D$ with its some multiple, 
we may assume that $D$ is a Cartier divisor such that $|D|$ is base point free. 
In particular, $\dim_kH^0(X, D) \geq 2$, otherwise $D \sim 0$ and the assertion is trivial.

Set $T_1:=\mathbb A_k^q=\mathbb A(H^0(X, D))$ (i.e. $q:=\dim_kH^0(X, D)$) and 
$$T:=(T_1)^{\dim X}=\mathbb A_k^q \times_k \cdots \times_k \mathbb A_k^q \simeq \mathbb A_k^{q\dim X}.$$
We can find an effective Cartier divisor $\mathcal D$ on $X \times_k T$ 
which satisfies the following properties: 
\begin{itemize}
\item{Each $k$-rational point $a\in (\mathbb A_k^r(k))^{\dim X} = T(k)$ 
corresponds to a pair $(D_1, \cdots, D_{\dim X})$, 
where, for every $i$, we obtain $D_i \sim D$ is an effective Cartier divisor. 
In this case, we write $a=[D_1, \cdots, D_{\dim X}]$. }
\item{$\mathcal D|_{X\times_k\{[D_1, \cdots, D_{\dim X}]\}}=D_1+\cdots+D_{\dim X}.$}
\end{itemize}
Fix a coordinate $t=(t_1, \cdots, t_{q\dim X})$ of $T$. 
Note that we have a subset $T(t, k_0) \subset T(k)$, 
which is dense in $T$ (cf. Definition~\ref{def-coordinate}).

Since $|D|$ is base point free, 
we can find a non-empty open subset $T^0 \subset T$ 
such that, for every $[D_1, \cdots, D_{\dim X}] \in T(k) \cap T^0$, 
the support $\Supp(\sum D_i)$ does not contain any $F$-pure centers of $(X, \Delta)$. 

We show that there exists $e_0 \in \Z_{>0}$ 
such that, 
for every $[D_1, \cdots, D_{\dim X}] \in T(t, k_0) \cap T^0$, 
the trace map 
$$\Tr_X^{e_0}((p^{e_0}-1)\Delta+(D_1+\cdots+D_{\dim X}))$$
is surjective. 
Fix $a=[D_1, \cdots, D_{\dim X}] \in  T(t, k_0) \cap T^0$. 
Then, by \cite[Main Theorem]{F-adjunction}, 
we can find $d_a \in \Z_{>0}$ such that 
$$\left(X, \Delta+\frac{1}{p^{d_a}-1}(D_1+\cdots+D_{\dim X})\right)$$ is sharply $F$-pure. 
Therefore, we can find $e_a \in d_a\Z_{>0}$ such that 
the trace map 
$$\Tr_X^{e_a}\left((p^{e_a}-1)\left(\Delta+\frac{1}{p^{d_a}-1}(D_1+\cdots+D_{\dim X})\right)\right)$$
$$=\Tr_{X\times_k\{a\}}^{e_a}\left((p^{e_a}-1)\left(\Delta+\frac{1}{p^{d_a}-1}\mathcal{D}|_{X\times_k \{a\}}\right)\right)$$
is surjective at every point. 
By Proposition~\ref{special-general} and the properness of $X$, 
we can find an open subset $a \in U_a \subset T$ such that 
$$\Tr_{X\times_k\{a\}}^{e_a}\left((p^{e_a}-1)\left(\Delta+\frac{1}{p^{d_a}-1}\mathcal{D}|_{X\times_k \{b\}}\right)\right)$$
is surjective for every $b \in T(t, k_0)$. 
Therefore, 
$$\Tr_{X\times_k\{a\}}^{ee_a}\left((p^{ee_a}-1)\left(\Delta+\frac{1}{p^{d_a}-1}\mathcal{D}|_{X\times_k \{b\}}\right)\right)$$
is surjective for every $b \in T(t, k_0)$ and every $e\in \Z_{>0}$. 
By the compactness of $T(t, k_0)$, we can find a required $e_0$. 

\medskip

Since $|D|$ is base point free and $T(t, k_0)$ is dense in $T$, 
we can find effective Cartier divisors 
$$D_1', \cdots, D_{p^{e_0}-1}' \sim D$$ 
which satisfy the following properties. 
\begin{itemize}
\item{$[D_i'] \in T_1(k)=\mathbb A_k^q(k)$ for every $1\leq i\leq p^{e_0}-1$.}
\item{For every $\{i_1, \cdots, i_{\dim X}\} \subset \{1, 2, \cdots, p^{e_0}-1\}$ 
with $i_1<i_2<\cdots<i_{\dim X}$, 
the point $[D_{i_1}', \cdots, D_{i_{\dim X}}']\in T(k)$ is contained in $T(t, k_0) \cap T^0$.}
\item{$\bigcap_{j\in J} D_j'=\emptyset$ 
for every subset $J \subset \{1, 2, \cdots, p^{e_0}-1\}$ with $|J|=\dim X+1$. }
\end{itemize}
We obtain 
$$D \sim_{\Q} \frac{1}{p^{e_0}-1}(D_1'+\cdots+D_{p^{e_0}-1}')=:D'$$
and 
$$\Tr_X^{e_0}((p^{e_0}-1)(\Delta+D'))$$
is surjective. 
Thus, $(X, \Delta+D')$ is sharply $F$-pure. 
\end{proof}

\begin{proof}[Proof of Theorem~\ref{Theorem-MMP}]
First we show the assertion for the case $\K=\Q$. 
Since there exists a log resolution, 
we may assume that $X$ is regular and $\Delta$ is simple normal crossing. 
By Proposition~\ref{Theorem-F} (resp. Proposition~\ref{Theorem-F1}), 
we can find $D' \sim_{\Q} D$ such that $(X, \Delta+D)$ is sharply $F$-pure (resp. strongly $F$-regular). 
In particular, it is log canonical (resp. klt) by \cite[Theorem~3.3]{HW}. 

Second we prove the assertion for the case $\K=\R$. 
We can write $D=\sum_{i=1}^r a_iD_i$ 
where $a_i\in\R_{>0}$ and $D_i$ is a semi-ample Cartier divisor. 
By the induction on $r$, we may assume that $r=1$. 
Thus, we obtain $D=a_1D_1$ where $a_1\in\R_{>0}$ and 
$D_1$ is a semi-ample Cartier divisor. 
By replacing $a_1$ with $\ulcorner a_1\urcorner$, 
we may assume that $D$ is a semi-ample Cartier divisor. 
Since there exists a log resolution, 
we may assume that $X$ is regular and $\Delta$ is simple normal crossing. 
By enlarging coefficients of $\Delta$ a little, we can reduce the problem to the case $\K=\Q$. 
\end{proof}





\end{document}